\def\cocoa
\def\cocoal
\def\opn#1#2{\def#1{\mathop{\kern0pt\fam0#2}\nolimits}} 
\newtheorem{theorem}{Theorem}[section]
\newtheorem{lemma}[theorem]{Lemma}
\newtheorem{proposition}[theorem]{Proposition}
\theoremstyle{definition}
\newtheorem{remark}[theorem]{Remark}
\newtheorem{definition}[theorem]{Definition}
\newtheorem{example}[theorem]{Example}
\newtheorem{remark/example}[theorem]{Remark/Example}
\let\oldlabel=\label
\def\prellabel{\marginparsep=1em\marginparwidth=44pt
 \def\label##1{\oldlabel{##1}\ifmmode\else\ifinner\else
 \marginpar{{\footnotesize\ \\ \tt
 ##1}}\fi\fi}}
\numberwithin{equation}{section}
\begin{document}
\title{ The standard graded property for vertex cover algebras of Quasi-Trees }
\author{Alexandru Constantinescu \and Le Dinh Nam}
\date{}
\maketitle
\begin{abstract}
In \cite{HHTZ} the authors characterize the  vertex cover algebras which are standard graded.  In this paper we give a simple combinatorial criterion for the standard graded property of vertex cover algebras in the case of quasi-trees. We also give an example of how this criterion works and compute the maximal degree of a minimal generator in that case.  
\end{abstract}
\section*{introduction}
Recently, the theory of vertex cover algebras has caught the attention of  researchers such as J. Herzog, T. Hibi and N.V. Trung. It has applications in graph and hypergraph theory. In some papers, as for instance in [HHT1],[HHT2],[HHTZ],[HHO], the authors study  the relation of this theory to unmixed bipartite graphs, perfect graphs and unimodular hypergraphs. The characterization of the standard graded property of vertex cover algebras is given in [HHTZ]. It asserts that the vertex cover algebra $A(\Delta)$ is standard graded if and only if $\Delta^{*}$ is a Mengerian simplicial complex. But it is not easy to check if the vertex cover algebra of a simplicial complex is standard graded or not. In this paper we add a new class of hypergraphs -- quasi-trees -- for which a simple combinatorial criterion for the standard graded property of the vertex cover algebra exists.

A quasi-tree is a connected simplicial complex whose facets can be ordered $F_1,\dots,F_m$ such that for all $i$, $F_i$ is a leaf of the simplicial complex with the facets $F_1,\dots ,F_i$. In Theorem 4.2 [HHTZ] the authors have  results about the standard graded property for a special quasi-tree class. The result states: a quasi-tree in codimension 1, such that each face of codimension 1 belongs to at most two facets, is a forest if and only if $A(\Delta)$ is standard graded. In this paper we  generalize  this result (Theorem \ref{thm1}). 
Our main theorem characterizes  the quasi-trees $\Delta$ for which $A(\Delta)$ standard graded. It asserts that $A(\Delta)$ is standard graded if and only if $\Delta$ has no special odd cycle. To illustrate our results, we give an example of a quasi-tree $\Delta$ for which $A(\Delta)$ is not standard graded and compute the maximal degree of a minimal generator in this case.\\

The authors wish to thank  J. Herzog and V. Welker for suggesting the problem and for many useful lessons, discussions and suggestions. Many thanks also to the organizers of Pragmatic 2008. 
\section{Quasi-Trees}
\begin{definition}
Let $ V = \{v_1,\dots,v_n\}$ be a finite set. A \textit{simplicial complex} $\Delta$ on $V$ is a collection of subsets of $V$ such that $F \in \Delta$ whenever $F \subset G$ for some $G \in \Delta$, and such that $\{v_i\} \in \Delta$ for $i=1,\dots,n$.\\
The elements of $\Delta$ are called \textit{faces}. The \textit{dimension of a face $F$}, dim$F$, is the number $\arrowvert F\arrowvert -1$. The \textit{dimension of $\Delta$} is \[\text{dim}\Delta=\text{max}\{\text{dim}F : F\in \Delta \}.\] The maximal faces under inclusion are called the \textit{facets} of the simplicial complex. The set of facets is denoted by $\Im (\Delta)$. A subcomplex $\Gamma$ of $\Delta$ is called a \textit{subcomplex of maximal dimension} (\emph{SMD} for short) if $\Im (\Gamma)\subset \Im (\Delta)$.
\end{definition}
\begin{definition}
Let $\Delta$ be a simplicial complex. A face $F \in \Im (\Delta)$ is called a \textit{leaf of $\Delta$} iff there exists $G \in \Im (\Delta)$ such that $ (H\cap F) \subset (G \cap F)$ for all $H \in \Im (\Delta)$. A face $G$ with the above property is called a \textit{ branch of $F$}.

Furthermore $\Delta$ is called a \textit{quasi-forest} if there exists a total order $\Im (\Delta)=\{F_1,\dots,F_m\}$ such that $F_i$ is a leaf of $\Gamma$, where  $\Gamma$ is the SMD  with $\Im (\Gamma)=\{F_1,\dots,F_i\}$ for all $i=1,\dots,m$. This order is called a \textit{leaf order} of the quasi-forest. A connected quasi-forest is called a \emph{quasi-tree}.\\
\end{definition}
\begin{definition}
Let $\Delta$ be a quasi-tree with leaf order $\{F_1,\dots,F_m\}$. We define \textit{a relation tree of $\Delta$}, denoted by $T(\Delta)$, in the following way:
\begin{itemize}
  \item  The vertices of $T(\Delta)$ are the facets of $\Delta$.
  \item  The edges are obtained recursively as follows: 
 \begin{itemize}
  \item[-] take the leaf $F_m$ of $\Delta$ and choose a branch $G$ of $F$;  denote  $G=br_{T(\Delta)}(F)$. 
  \item[-] Set $\{F_m,G\}$ to be an edge of $T(\Delta)$. 
  \item[-] Remove $F_m$ from $\Delta$ and proceed with the remaining quasi-tree as before to determine the other edges of $T(\Delta)$.
\end{itemize}
\end{itemize}   
\end{definition}

For the formulation of the following remark we say that a vertex $F$ of the graph $T(\Delta)$ is a  \textit{free vertex}  iff there exists only one vertex $G$ such that $\{F,G\}$ is an egde of $T(\Delta)$.
Note, that if the graph $T(\Delta)$ is considered as a 1-dimensional simplicial complex then a free vertex is a vertex in a leaf that is only contained in that leaf.
\begin{remark} (a) The graph $T(\Delta)$ depends on the leaf order and the choice of the branch for each leaf. However it is always a tree.\\
(b) Each free vertex of $T(\Delta)$ is a leaf of $\Delta$. 
\end{remark}
\begin{example}Let $\Delta$ be the  quasi-tree in Figure 1, which has  the facets $\{1,2,3\}$, $\{1,2,4\}$, $\{1,2,5\}$, $\{2,3,6\}$, $\{2,3,7\}$.\\

\begin{figure}[h]
\label{F1}
\setlength{\unitlength}{0.5cm}
\begin{picture}(12,8)
\put(1,3){\circle*{0.2}}
\put(2,7){\circle*{0.2}}
\put(4,1){\circle*{0.2}}
\put(6,6){\circle*{0.2}}
\put(8,1){\circle*{0.2}}
\put(10,7){\circle*{0.2}}
\put(11,3){\circle*{0.2}}

\put(1,3){\line(3,-2){3}}
\put(1,3){\line(5,3){5}}
\put(2,7){\line(1,-3){0.95}}
\put(2,7){\line(4,-1){4}}
\put(4,1){\line(2,5){2}}
\put(4,1){\line(1,0){4}}
\put(8,1){\line(-2,5){2}}
\put(8,1){\line(3,2){3}}
\put(10,7){\line(-4,-1){4}}
\put(10,7){\line(-1,-3){0.95}}
\put(11,3){\line(-5,3){5}}

\multiput(4,1)(-0.2,0.6){6}{\circle*{0.05}}
\multiput(8,1)(0.2,0.6){6}{\circle*{0.05}}

\put(2.1,2.7){$F_2$}
\put(3,5.5){$F_3$}
\put(5.6,2.6){$F_1$}
\put(8.2,5.5){$F_4$}
\put(9.3,2.7){$F_5$}

\put(3.7,0.2){1}
\put(5.8,6.3){2}
\put(7.9,0.2){3}
\put(0.3,2.8){4}
\put(1.6,7.3){5}
\put(10,7.3){6}
\put(11.3,2.8){7}

\end{picture}
\caption{$\Delta$}
\end{figure}

It is easy to check that both  graphs in Figure 2 are relation trees of $\Delta$, for the leaf order $\{F_1, F_2, F_3, F_4, F_5\}$.
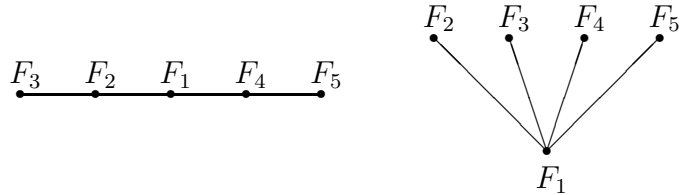
\begin{figure}[h]
\setlength{\unitlength}{0.5cm}
\begin{picture}(19,8)
\multiput(1,3.5)(2,0){5}{\circle*{0.2}}
\multiput(12,5)(2,0){4}{\circle*{0.2}}
\put(15,2){\circle*{0.2}}

\put(1,3.5){\line(1,0){8}}
\put(15,2){\line(-1,1){3}}
\put(15,2){\line(-1,3){1}}
\put(15,2){\line(1,3){1}}
\put(15,2){\line(1,1){3}}

\put(0.7,3.8){$F_3$}
\put(2.7,3.8){$F_2$}
\put(4.7,3.8){$F_1$}
\put(6.7,3.8){$F_4$}
\put(8.7,3.8){$F_5$}

\put(11.7,5.3){$F_2$}
\put(13.7,5.3){$F_3$}
\put(15.7,5.3){$F_4$}
\put(17.7,5.3){$F_5$}
\put(14.7,1){$F_1$}

\end{picture}
\caption{Two different relation trees of $\Delta$}
\end{figure}

\end{example}

Let $\Delta$ be a quasi-tree. Given $\{F_1,\dots,F_m\}$ a leaf order of $\Delta$ and a relation tree $T(\Delta)$, we define  a partial order on $\Im (\Delta)$ in the following way: \[G\leq_{T(\Delta)}F\] iff there exists a chain $G=H_0,H_1,\dots,H_t=F$ such that $ H_i=br_{T(\Delta)}(H_{i+1})$ for all $i=0,\dots,t-1$. Because $\Delta$ is a quasi-tree, there always exists smallest  facet $F_1$ in $\Im (\Delta)$ with respect to this order. For this facet define: $br_{T(\Delta)}(F_1):=F_1$. We will omit the index $T(\Delta)$ in the notation of the partial order wherever $T(\Delta)$ is clear from the context.

It is natural   for a simplicial complex to call an alternating sequence of distinct vertices and facets $v_1,F_1,v_2,F_2,\dots, v_s,F_s,v_{s+1}=v_1,$ with $s\geq2$, a \textit{cycle} if $v_i,v_{i+1}\in F_i$ for all $i=1,\dots,s$. Such a cycle is called \textit{special} if no facet contains more than two vertices of the cycle. We have the following result:
\begin{lemma}
\label{lm2}
Let $\Delta$ be a quasi-tree, $ v_1,F_1,v_2,F_2,\dots,v_s,F_s,v_{s+1}=v_1$ be a cycle of $\Delta$ and $T(\Delta)$  be a relation tree of $\Delta$. Denote by $T_0$ the minimal subtree of $T(\Delta)$ with respect to inclusion containing $\{F_1,\dots,F_s\}$. Then
$ \mid G\cap \{v_1,\dots,v_s\} \mid \geq 2$ for all  $ G \in V(T_0)$.
\end{lemma}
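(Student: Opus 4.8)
The plan is to induct on the number of vertices of $T(\Delta)$ (equivalently, on the number of facets of $\Delta$), peeling off free vertices of the relation tree. Observe that the statement is vacuous unless $s \geq 2$, and for a free vertex $F$ of $T(\Delta)$ that does \emph{not} belong to $\{F_1,\dots,F_s\}$, removing $F$ from $\Delta$ leaves a smaller quasi-tree whose relation tree $T(\Delta)\setminus\{F\}$ still contains the same cycle and the same minimal subtree $T_0$; so by induction we are done in that case. The real work is therefore to handle the situation where every free vertex of $T(\Delta)$ lies in $\{F_1,\dots,F_s\}$, and to establish the base case.

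First I would reduce to the case $V(T_0) = \{F_1,\dots,F_s\}$, i.e. where no ``extra'' facets are needed to connect the cycle facets inside $T(\Delta)$: indeed $T_0$ is a subtree, its leaves are among the $F_i$ (a leaf of $T_0$ that were not a cycle facet could be pruned without disconnecting the others), and we may restrict attention to the quasi-tree supported on $V(T_0)$, observing that $br_{T_0}$ is inherited from $br_{T(\Delta)}$ so the partial order and the branch structure are compatible. Then I would pick a free vertex $G$ of $T_0$; by the previous paragraph's reduction $G = F_j$ for some $j$. Since $F_j$ is a free vertex of the relation tree, by Remark (b) it is a leaf of $\Delta$, hence has a branch $B = br_{T(\Delta)}(F_j)$ with $H \cap F_j \subseteq B \cap F_j$ for all facets $H$. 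Now $F_j$ carries the two cycle vertices $v_j, v_{j+1}$ (these are the two vertices the cycle forces into $F_j$, and they are distinct by the definition of a cycle), and I must locate them in the remaining facets: both $v_j \in F_{j-1}$ and $v_{j+1} \in F_{j+1}$, so $v_j \in H\cap F_j \subseteq B$ and $v_{j+1}\in B$ as well, whence $\{v_j,v_{j+1}\}\subseteq B$. This shows $B$ already contains two cycle vertices; combined with the induction hypothesis applied to the shorter ``cycle'' obtained by bypassing $F_j$ through $B$ (replacing $v_j, F_j, v_{j+1}$ by $v_j, B, v_{j+1}$, or by $v_j=v_{j+1}$ if $B$ forces them to coincide — treat the degenerate short cases $s=2$ separately), every facet of the smaller minimal subtree meets $\{v_1,\dots,v_s\}$ in at least two vertices, and $F_j$ itself does too, giving the claim for all of $V(T_0)$.

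The main obstacle I anticipate is the bookkeeping around the branch $B$: one must verify that $B$ is itself a vertex of $T_0$ (or can be adjoined to form the new, smaller minimal subtree) and that the ``bypass'' operation genuinely produces a valid cycle in the reduced complex — in particular that the vertices stay distinct or that the degenerate cases where $s$ drops to $2$ or where $v_j$ and $v_{j+1}$ become identified are handled by hand. A secondary subtlety is that $F_j$ being a \emph{free vertex of $T_0$} need not make it a free vertex of the ambient $T(\Delta)$, so the appeal to Remark (b) must be made after passing to the quasi-tree on $V(T_0)$, using that $T_0$ is a legitimate relation tree of that subcomplex with the inherited branch function. Once these compatibility points are nailed down, the induction closes cleanly, with the base case being a single facet (where $s=2$, the cycle uses one facet twice — excluded — so the base is really the two-facet quasi-tree, handled directly).
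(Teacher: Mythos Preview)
Your inductive strategy is genuinely different from the paper's proof, which is a direct case analysis. The paper fixes $G\in V(T_0)$ and uses the partial order $\leq_{T(\Delta)}$: if $G$ is not the root $G_1$ of (the induced leaf order on) $T_0$, it splits the cycle facets into $A=\{i:G\leq F_i\}$ and its complement $B$, observes that the cyclic structure forces at least two cycle vertices $v_\alpha,v_\beta$ to lie in $(\bigcup_{i\in A}F_i)\cap(\bigcup_{j\in B}F_j)$, and then traces the branch chains $G=H_1,\dots,H_t=F_i$ to push each such vertex down into $G$. The root $G=G_1$ is handled by a separate (short) case split on how many facets have $G_1$ as branch. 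No induction, no bypass, no shortened cycles.

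The obstacle you flag is exactly where your sketch is incomplete, and it is not mere bookkeeping. When the branch $B$ of the free vertex $F_j$ already equals some other cycle facet $F_k$ with $k\notin\{j-1,j+1\}$, replacing $v_j,F_j,v_{j+1}$ by $v_j,B,v_{j+1}$ produces a sequence with $F_k$ repeated, so it is not a cycle and the induction hypothesis does not apply. One can repair this by splitting into \emph{two} cycles through $F_k$ (namely $v_{j+1},F_{j+1},\dots,v_k,F_k,v_{j+1}$ and $v_{k+1},F_{k+1},\dots,v_j,F_k,v_{k+1}$, using $v_j,v_{j+1}\in F_k$), but then you must show that the union of their two minimal subtrees covers all of $T_0\setminus\{F_j\}$, which requires an argument about leaves of $T_0\setminus\{F_j\}$ lying among the remaining cycle facets. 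When $B\in\{F_{j-1},F_{j+1}\}$ you get a single cycle of length $s-1$ with one cycle vertex dropped, and again you need that its minimal subtree is all of $T_0\setminus\{F_j\}$. These verifications are doable, and your compatibility claim (that $T_0$ is a bona fide relation tree of the subcomplex on $V(T_0)$ with the inherited branches) is correct, but until that case analysis is written out the proof is not complete. The paper's direct partial-order argument sidesteps all of this.
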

\begin{proof}
Let $\Gamma$ be the quasi-tree with $\Im (\Gamma)=V(T_0)$. Consider on $\Gamma$ the leaf order induced by the leaf order on $\Delta$: $\Im (\Gamma)=\{G_1,\dots,G_r\}$. We have the partial order $\leq$ on $\Im(\Delta)$ induced by the leaf order and by the relation tree $T(\Delta)$ as above. Because $T_0$ is the minimal subtree of $T(\Delta)$ containing $\{F_1,\dots,F_s\}$ we have that if $H\in V(T_0)$, with $H\leq F_i$ for all $i=1,\dots,s,$ then $H=G_1$.\\
Let $G\in V(T_0)$.\\

 If $G\not= G_1$ then there exists a set $A=\{i_1,\dots,i_p\} \subset \{1,\dots,s\}$ such that $G\leq F_i$ for all $i\in A$ and $G\not\leq F_j$ for all $j\in B$ with $B=\{1,\dots,s\}\setminus A$. Assume $i_1<i_2< \dots< i_p$.
\begin{itemize}
  \item[-] If there  exists $q\in \{1,\dots,p-1\}$ such that $i_{q+1}-i_q\geq 2$ then \[\{v_{i_q+1},v_{i_{q+1}}\}\subset \big(\bigcup_{i\in A}F_i\big)\bigcap\big(\bigcup_{j\in B}F_j\big).\]
  \item[-] If $i_1,\dots,i_p$ are consecutive then \[\{v_{i_1},v_{i_p+1}\}\subset \big(\bigcup_{i\in A}F_i\big)\bigcap\big(\bigcup_{j\in B}F_j\big).\]
\end{itemize}
So we have \[ \mid \big(\bigcup_{i\in A}F_i\big)\bigcap\big(\bigcup_{j\in B}F_j\big) \cap \{v_1,\dots,v_s\} \mid \geq 2.\] We will show that \[\Big( \big(\bigcup_{i\in A}F_i\big)\bigcap\big(\bigcup_{j\in B}F_j\big) \cap \{v_1,\dots,v_s\}\Big)\subset G.\]\\
Let \[v_i\in \big(\bigcup_{i\in A}F_i\big)\bigcap\big(\bigcup_{j\in B}F_j\big).\] So $v_i\in F_{i-1}$ and $v_i\in F_i$, with $i-1\in A, i\in B$ or $i\in A, i-1\in B$. We can assume without loss of generality that $i\in A, i-1\in B$.\\ 
Because $G\leq F_i$ there exists a chain $G = H_1, H_2, \dots, H_t = F_i$ such that  $H_i=br_{T(\Delta)}(H_{i+1})$. As we also have $G\not\leq F_{i-1}$ there exists another chain $K_1,\dots,K_l=F_{i-1}$ such that $K_i=br_{T(\Delta)}(K_{i+1})$ and $K_1$ appears before G in the leaf order $\{F_1,\dots,F_m\}$. Choose this last chain such that $K_1$ is maximal with this property (with respect to the leaf order). By the choice of branches  we have $v_i\in H_j$ for all $j=1,\dots,t$ and $v_i\in K_j$ for all $j=2,\dots,l$. So $v_i\in G$.\\

 If $G=G_1$ we have two cases:

\textit{Case 1:} There exists only one $H\in V(T_0)$ such that $G_1=br_{T(\Delta)}(H)$ (in this case $H=G_2$). So $G_2\leq G_t$ for all $t=3,\dots,r$. By the minimality of $T_0$ we  have $G_1\in \{F_1,\dots,F_s\}$. So  $\mid G\cap \{v_1,\dots,v_s\} \mid \geq 2$.

\textit{Case 2:} There exists a set of indices $J=\{j_1,j_2,\dots,j_k\}\subset \{3,\dots,r\}$ such that:
\begin{itemize}
\item[-] $G_1=br_{T(\Delta)}(G_{j})$ for all $j\in J$, 
\item[-] $G_1\not\in \{F_1,\dots,F_s\}$ and  
\item[-] $G_2\not\leq G_j$ for all $j\in J$. 
\end{itemize}
There exists a set $A=\{i_1,\dots,i_p\} \subset \{1,\dots,s\}$ such that:
\begin{itemize}
\item[-] $G_2\leq F_i$ for all $i\in A$ and 
\item[-]$G_2\not\leq F_l$ for all $l\in B$, with $B=\{1,\dots,s\}\setminus A$. 
\end{itemize}
For all $l\in B$ there exists $j\in J$ such that $G_j\leq F_l$. As above, there exist two vertices $\{v_{\alpha},v_{\beta}\}\subset (G_2\cap(\bigcup_{j\in J} G_j))$. Because $G_1=br_{T(\Delta)}(G_j)$ for all $j\in J$ we also have $\{v_{\alpha},v_{\beta}\}\subset G_1$.   
\end{proof}
\section{The vertex cover algebra of a quasi-tree}
\begin{definition}
Let $\Delta$ be a simplicial complex on the vertex set $[n]$. An integer vector $a=(a_1,\dots,a_n)\in \mathbb{N}^n$ is called a \textit{cover of order $k$ (or k-cover) of $\Delta$} if $\sum_{i\in F}a_i\geq k$ for all $F \in \Im (\Delta)$.
\end{definition}
Let $K$ be a field and $S=K[x_1,\dots,x_n]$ be the polynomial ring in $n$ indeterminates over $K$. Let $S[t]$ be a polynomial ring over $S$ in the indeterminate $t$ and consider the $K$-vector space $A(\Delta)\subset S[t]$  generated by all monomials $x_1^{a_1}x_2^{a_2}\dots x_n^{a_n}t^k$ such that $a=(a_1,\dots,a_n)\in \mathbb{N}^n$ is a k-cover of $\Delta$.

We have $A(\Delta)=\bigoplus_{k\geq 0}A_k(\Delta)$ with $A_0(\Delta)=S$ and $A_k(\Delta)$ is spanned by the monomials $x_1^{a_1}x_2^{a_2}\dots x_n^{a_n}t^k$ such that $a=(a_1,\dots,a_n)\in \mathbb{N}^n$ is a k-cover of $\Delta$. If $a$ is a $k$-cover and $b$ is a $l$-cover, then $a + b$ is a $(k + l)$-cover. This implies that
$$A_k(\Delta)A_l(\Delta)\subset A_{k+l}(\Delta).$$
Therefore, $A(\Delta)$ is a graded $S$-algebra. We call it the \textit{vertex cover algebra} of the simplicial complex $\Delta$.

A k-cover $a=(a_1,\dots,a_n)\in \mathbb{N}^n$ of a simplicial complex is \textit{decomposable}
if there exists an $i$-cover $b$ and a $j$-cover $c$ \big($b,c\not=(0,\dots,0)$\big) such that $a=b+c$ and $k=i+j$. If $a$ is not decomposable, we call it \textit{indecomposable}.\\

For $F\subset [n]$ we denote by $P_F$ the prime ideal generated by the variables $x_i$ with $i\in F$. \\
Let $\Delta$ be a simplicial complex, we set \[I=\bigcap_{F\in \Im (\Delta)}P_F.\] The symbolic Rees algebra of $I$ is: \[R^s(I)=\bigoplus_{k\geq0}I^{(k)}t^k.\] In \cite[Lemma 4.1]{HHT1}, the authors prove that $A(\Delta)=R^s(I)$  and in Theorem 4.2 they describe the structure of a vertex cover algebra. Here is the precise result:
\begin{theorem}[Theorem 4.2 \cite{HHT1}]
The vertex cover algebra $A(\Delta)$ is a finitely generated, graded and normal Cohen-Macaulay $S$-algebra.
\end{theorem}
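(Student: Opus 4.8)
The plan is to realise $A(\Delta)$ as an affine semigroup ring and then apply the classical structure theory of normal semigroup rings. I would write $A(\Delta)=K[M]$, where $M$ is the set of all $(a,k)=(a_1,\dots,a_n,k)\in\mathbb{N}^{n+1}$ such that $a$ is a $k$-cover of $\Delta$. Since the sum of a $k$-cover and an $\ell$-cover is a $(k+\ell)$-cover and the zero vector is a $0$-cover, $M$ is a submonoid of $\mathbb{N}^{n+1}$, and by construction the monomials $x_1^{a_1}\cdots x_n^{a_n}t^k$ with $(a,k)\in M$ form a $K$-basis of $A(\Delta)$; so $A(\Delta)$ is the semigroup $K$-algebra of $M$.

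The first real step is a polyhedral description of $M$. Let $C\subset\mathbb{R}^{n+1}$ be the rational polyhedral cone defined by $a_i\ge 0$ for $i=1,\dots,n$, $k\ge 0$, and $\sum_{i\in F}a_i\ge k$ for each facet $F\in\Im(\Delta)$. Unwinding the definition of a $k$-cover shows $M=C\cap\mathbb{Z}^{n+1}$. By Gordan's lemma the monoid of lattice points of a finitely generated rational cone is finitely generated, so $M$ is finitely generated; since the part of $t$-degree $0$ is $A_0(\Delta)=S$ (every vector in $\mathbb{N}^n$ is a $0$-cover), finitely many monoid generators of positive $t$-degree generate $A(\Delta)$ as an $S$-algebra. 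Combined with the $\mathbb{N}$-grading by $k$, this gives the ``finitely generated graded $S$-algebra'' part.

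For normality and the Cohen--Macaulay property I would use that $M=C\cap\mathbb{Z}^{n+1}$ is a normal (saturated) affine semigroup: if $mz\in M$ for an integer $m\ge 1$ and some $z$ in the group generated by $M$, then $z\in C$ (cones are stable under division by positive scalars) and $z\in\mathbb{Z}^{n+1}$, hence $z\in M$. By Hochster's theorem the semigroup ring of a normal affine semigroup is an integrally closed Cohen--Macaulay domain, and applying this to $K[M]=A(\Delta)$ yields normality and Cohen--Macaulayness at once. (That $A(\Delta)$ is a domain is clear anyway, being a subring of $S[t]$.)

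I expect the one genuinely nontrivial ingredient to be Hochster's theorem that normal affine semigroup rings are Cohen--Macaulay; everything else is the routine translation of the combinatorics of $k$-covers into lattice points of a rational cone. An equivalent route, and the one closest to the source, is to first invoke \cite[Lemma 4.1]{HHT1} to identify $A(\Delta)$ with the symbolic Rees algebra $R^s(I)$ of $I=\bigcap_{F\in\Im(\Delta)}P_F$ and then run the same cone argument on the polyhedron defining $I$; the structure of the proof is unchanged.
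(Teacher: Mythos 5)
The paper states this theorem only as a citation of \cite{HHT1} and gives no proof of its own, so there is nothing internal to compare against; your argument is correct and is essentially the proof given in that reference. Realising $A(\Delta)$ as the semigroup ring of the lattice points of the rational cone cut out by the cover inequalities, then invoking Gordan's lemma for finite generation, saturation of $C\cap\mathbb{Z}^{n+1}$ for normality, and Hochster's theorem for Cohen--Macaulayness is exactly the standard route, and all the steps check out.
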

\begin{definition}
Let A be a finitely generated $\mathbb{N}$-graded algebra. Denote by $d(A)$ the maximal degree of generator of A in a minimal generating set.  The algebra A is called \textit{standard graded} iff $d(A)=1$.
\end{definition}
\begin{remark}
If $\gamma$ is an indecomposable k-cover of $A(\Delta)$ then $\gamma$ belongs to the set of generators of $A(\Delta)$. In particular, $d(A(\Delta))\geq k$.
\end{remark}
\begin{lemma}
\label{lm1}
Let $\Delta$ be a simplicial complex and let $F$ be a leaf of $\Delta$. Let $\Gamma$ be the SMD obtained from $\Delta$ by removing $F$. We have that $d(A(\Delta)) \geq d(A(\Gamma))$.
\end{lemma}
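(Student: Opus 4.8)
The plan is to take a minimal generator of $A(\Gamma)$ of largest degree and lift it to a minimal generator of $A(\Delta)$ of the same degree, by adding extra weight on a free vertex of the leaf $F$. By the Remark above it suffices to construct, from a minimal generator of $A(\Gamma)$ of top degree $k=d(A(\Gamma))$ (that is, from an indecomposable $k$-cover of $\Gamma$), an indecomposable $k$-cover of $\Delta$; this gives $d(A(\Delta))\geq k=d(A(\Gamma))$.

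The one combinatorial input I would isolate first is that the leaf $F$ contains a \emph{free vertex} $w$, i.e. a vertex lying in no facet of $\Delta$ other than $F$ (we may assume $\Delta$ has at least two facets, the remaining case being trivial). Indeed, if $G\neq F$ is a branch of $F$ and $w\in F\setminus G$, then $w\in H$ for a facet $H\neq F$ would give $w\in H\cap F\subseteq G\cap F$, contradicting $w\notin G$. The role of $w$ is that it is \emph{isolated} in $\Gamma$: for every facet $H$ of $\Gamma$ and every $a\in\mathbb{N}^n$, the sum $\sum_{l\in H}a_l$ is unaffected by the $w$-th coordinate of $a$.

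Now, starting from an indecomposable $k$-cover $a$ of $\Gamma$, I would define $a'$ by $a'_l=a_l$ for $l\neq w$ and $a'_w=a_w+\max\{0,\,k-\sum_{l\in F}a_l\}$. Since $a'\geq a$ coordinatewise, $a'$ is still a $k$-cover of $\Gamma$; and since $\sum_{l\in F}a'_l=\max\{\sum_{l\in F}a_l,\,k\}\geq k$, the vector $a'$ is in fact a $k$-cover of $\Delta$. It remains to prove $a'$ indecomposable in $\Delta$. Assume instead $a'=b+c$ with $b$ an $i$-cover, $c$ a $j$-cover of $\Delta$, $i+j=k$, and $b,c\neq 0$. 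The only coordinate where $b+c$ exceeds $a$ is $w$, so one can redistribute the weight there: choose $\tilde b_w,\tilde c_w\geq 0$ with $\tilde b_w+\tilde c_w=a_w$, $\tilde b_w\leq b_w$, $\tilde c_w\leq c_w$ (possible because $b_w+c_w\geq a_w$), and let $\tilde b$, $\tilde c$ agree with $b$, $c$ away from $w$ and have $w$-coordinates $\tilde b_w$, $\tilde c_w$. Then $\tilde b+\tilde c=a$, and --- here the isolation of $w$ in $\Gamma$ is used --- $\tilde b$ is an $i$-cover and $\tilde c$ a $j$-cover of $\Gamma$. If both are nonzero, this contradicts the indecomposability of $a$ in $\Gamma$. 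If, say, $\tilde b=0$, then $b$ is supported only at $w$, so evaluating $c=a'-b$ at the facet $F$ (which contains $w$) yields $\sum_{l\in F}c_l=\sum_{l\in F}a'_l-b_w$; comparing this with $\sum_{l\in F}c_l\geq k$ and $\sum_{l\in F}a'_l=\max\{\sum_{l\in F}a_l,\,k\}$, a short case distinction (according to the sign of $k-\sum_{l\in F}a_l$) forces $b_w=0$, hence $b=0$, a contradiction; the case $\tilde c=0$ is symmetric.

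I expect the last step to be the only real obstacle: showing that a decomposition of $a'$ in $\Delta$ cannot survive once the added weight at $w$ is shaved off, and in particular treating the degenerate case in which one piece collapses onto the single vertex $w$ (which is where the order value $0$ has to be handled). The rest --- existence of a free vertex in a leaf, and the invisibility of an isolated vertex of $\Gamma$ to the cover conditions of $\Gamma$ --- is routine bookkeeping.
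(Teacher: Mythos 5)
Your proof is correct and rests on the same core idea as the paper's: locate a free vertex $w$ of the leaf $F$ (a vertex in no other facet of $\Delta$) and lift an indecomposable $k$-cover $a$ of $\Gamma$, $k=d(A(\Gamma))$, to an indecomposable $k$-cover of $\Delta$ by adding weight at $w$. The difference is in how much weight you add, and it is not a cosmetic one. The paper sets the $w$-coordinate equal to $k$ outright and asserts indecomposability without argument; you add only the deficit $\max\{0,\,k-\sum_{l\in F}a_l\}$ and then actually prove indecomposability by shaving the surplus at $w$ off a putative decomposition. Your choice is the right one: with the paper's choice, whenever $\sum_{l\in F\cap V(\Gamma)}a_l>0$ the lifted vector decomposes as (itself minus the unit vector at $w$) plus (the unit vector at $w$), i.e.\ a $k$-cover plus a nonzero $0$-cover, so the cover the paper exhibits need not be indecomposable; your ``minimal top-up'' version avoids exactly this failure, and your final case analysis is what makes the lemma's proof complete rather than merely plausible. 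The only point worth making explicit in your degenerate case $\tilde b=0$ is that $b$ is then supported at $w$, which lies in no facet of $\Gamma$, so $b$ can only be a $0$-cover of $\Delta$ (as $\Gamma$ has at least one facet); hence $i=0$ and $j=k$, which is what justifies the inequality $\sum_{l\in F}c_l\geq k$ you use. With that observation, your two bounds $b_w\leq\max\{0,\,k-\sum_{l\in F}a_l\}$ and $b_w\leq\max\{\sum_{l\in F}a_l-k,\,0\}$ do force $b_w=0$, completing the contradiction.
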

\begin{proof}
Since $F$ is a leaf of $\Delta$ , there must exist  a vertex contained in  $F$  which does not belong to any other facet of $\Gamma$. Assume $V(\Gamma)=\{1,\dots,m\}$ and $m+1 \in F\setminus V(\Gamma)$.\\
For all $k>0$, if $c=(c_1,\dots,c_m,0,\dots,0)$ is an indecomposable k-cover of $\Gamma$, then $c'=(c_1,\dots,c_m,k,0,\dots,0)$ is an indecomposable k-cover of $\Delta$. So we have $d(A(\Delta)) \geq d(A(\Gamma))$.
\end{proof}
In \cite{HHTZ}, the authors characterized the simplicial complexes $\Delta$ such that $A(\Gamma)$ is standard graded for all subcomplexes of maximal dimension $\Gamma$ of $\Delta$. The precise result is:
\begin{theorem}[ Theorem 2.2 \cite{HHTZ}]
\label{bs}
Let $\Delta$ be a simplicial complex. The following conditions are equivalent:
\begin{itemize}
\item[(i)] The vertex cover algebra $A(\Gamma)$ is standard graded for all $\Gamma \subseteq \Delta$.
\item[(ii)] The vertex cover algebra $A(\Gamma)$ has no generator of degree 2 for all $\Gamma \subseteq \Delta$.
\item[(iiii)] $\Delta$ has no special odd cycle.
\end{itemize}
\end{theorem}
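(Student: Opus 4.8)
The plan is to prove the cyclic chain of implications $(i)\Rightarrow(ii)\Rightarrow(iii)\Rightarrow(i)$. The implication $(i)\Rightarrow(ii)$ is immediate: $A(\Gamma)$ standard graded means $d(A(\Gamma))=1$, so $A(\Gamma)$ has no generator of degree $2$, and this holds for every $\Gamma\subseteq\Delta$. For $(ii)\Rightarrow(iii)$ I argue by contraposition. Given a special odd cycle $v_1,F_1,\dots,v_s,F_s,v_{s+1}=v_1$ of $\Delta$ with $s$ odd, let $\Gamma_0$ be the SMD with $\Im(\Gamma_0)=\{F_1,\dots,F_s\}$; specialness forces $F_i\cap\{v_1,\dots,v_s\}=\{v_i,v_{i+1}\}$ for each $i$. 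Let $a$ be the $0$--$1$ vector supported on $\{v_1,\dots,v_s\}$. Then $\sum_{j\in F_i}a_j=2$ for all $i$, so $a$ is a $2$-cover of $\Gamma_0$; it is a minimal $2$-cover, since deleting the entry at any $v_\ell$ drops $\sum_{j\in F_{\ell-1}}a_j$ to $1$; and if $a=b+c$ with $b,c$ nonzero $1$-covers, then (as $a\le\mathbf 1$) the supports of $b$ and $c$ partition $\{v_1,\dots,v_s\}$, and, since each $F_\ell$ must meet both supports while $F_\ell\cap\{v_1,\dots,v_s\}=\{v_\ell,v_{\ell+1}\}$, the vertices $v_\ell,v_{\ell+1}$ lie in different parts for every $\ell$ --- a proper $2$-colouring of an odd cycle, which is impossible. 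Hence $a$ is an indecomposable $2$-cover, so $x^{a}t^{2}$ is a generator of $A(\Gamma_0)$ of degree $2$, contradicting $(ii)$.

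For $(iii)\Rightarrow(i)$ I first observe that $(iii)$ is inherited by every SMD $\Gamma$ of $\Delta$: an odd cycle of $\Gamma$ is an odd cycle of $\Delta$ whose cycle-facets all lie in $\Im(\Gamma)$, so the cycle-facet containing at least three of its vertices (which exists by $(iii)$ for $\Delta$) witnesses non-specialness inside $\Gamma$ as well. It therefore suffices to prove that if $\Delta$ has no special odd cycle then $A(\Delta)$ is standard graded. Let $M$ be the facet--vertex incidence matrix of $\Delta$. Any square submatrix of $M$ of odd order with all row and column sums equal to $2$ is the incidence matrix of a disjoint union of bipartite cycles, one of which has odd order; it reads off an odd cycle of $\Delta$ in which every cycle-facet meets the cycle in exactly two vertices, i.e.\ a special odd cycle. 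So $(iii)$ forces $M$ to be a balanced matrix.

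Now I would invoke the integrality theorem of Fulkerson--Hoffman--Oppenheim for balanced matrices: if $M$ is balanced then, for all integer vectors $u$ and $r$, the polyhedron $\{x:\mathbf 0\le x\le u,\ \mathbf 1\le Mx\le r\}$ is integral. Given a $k$-cover $a$ with $k\ge 1$, set $a':=\min(a,k\mathbf 1)$ (still a $k$-cover, now with all entries $\le k$), $u:=\min(\mathbf 1,a')$, and $r:=Ma'-(k-1)\mathbf 1$. The point $a'/k$ lies in $P:=\{x:\mathbf 0\le x\le u,\ \mathbf 1\le Mx\le r\}$, so $P$ is a nonempty integral polytope contained in $[0,1]^n$ and hence contains a $0$--$1$ vector $\varepsilon$. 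By construction $\varepsilon\le a'$, $\varepsilon$ is a $1$-cover, and $a'-\varepsilon\ge\mathbf 0$ is a $(k-1)$-cover; adding $a-a'$ back to $\varepsilon$ expresses $a$ as a $1$-cover plus a $(k-1)$-cover. Induction on $k$ (with $k=1$ trivial) now writes every $k$-cover as a sum of $k$ $1$-covers, so $A_k(\Delta)=A_1(\Delta)A_{k-1}(\Delta)$ for all $k\ge 2$ and $A(\Delta)$ is standard graded.

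The main obstacle is the implication $(iii)\Rightarrow(i)$; concretely, the two delicate points are verifying that ``no special odd cycle'' is precisely the condition excluding the balanced-matrix obstruction, and the application of the integrality theorem together with the truncation/box bookkeeping needed to guarantee the extracted $0$--$1$ vector is $\le a$. If one prefers to avoid quoting Fulkerson--Hoffman--Oppenheim, this step can instead be carried out by hand: pass to a minimal counterexample $(\Delta,a)$ with $a$ indecomposable of order $k\ge 2$; reduce to the case where $\Delta$ has no singleton facet, all $a_i\ge 1$, and every vertex lies in a tight facet (one with $\sum_{j\in F}a_j=k$); then the impossibility of splitting $a$ as a $1$-cover plus a $(k-1)$-cover means no transversal of $\Im(\Delta)$ can meet every tight facet in at most one vertex, which is a $2$-colouring obstruction producing a special odd cycle among the tight facets. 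Making that extraction rigorous --- ensuring the vertices and facets of the resulting cycle are distinct and that minimality forces specialness --- is the genuinely subtle part.
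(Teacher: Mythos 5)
The paper does not prove this statement at all --- it is imported verbatim as Theorem~2.2 of [HHTZ] --- so you are really re-deriving the cited result. Your implications (i)$\Rightarrow$(ii) and (ii)$\Rightarrow$(iii) are correct and complete: the restriction of a special odd cycle to the subcomplex generated by its facets, the $0$--$1$ vector supported on the cycle vertices, the minimality argument ruling out a $(2,0)$ splitting, and the proper-$2$-colouring contradiction ruling out a $(1,1)$ splitting are all sound. Your translation of ``no special odd cycle'' into balancedness of the facet--vertex incidence matrix is also correct and is exactly the route taken in [HHTZ] (it is Berge's characterization of balanced hypergraphs).

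The gap is in the theorem you invoke for (iii)$\Rightarrow$(i). The Fulkerson--Hoffman--Oppenheim theorem asserts integrality of polyhedra of the form $\{x\ge 0: M_1x\ge \mathbf 1,\ M_2x\le\mathbf 1,\ M_3x=\mathbf 1\}$ for a balanced matrix with rows partitioned into $M_1,M_2,M_3$; the right-hand sides are all equal to $\mathbf 1$. The statement you quote --- integrality of $\{x: \mathbf 0\le x\le u,\ \mathbf 1\le Mx\le r\}$ for \emph{arbitrary} integral $u$ and $r$ --- is not their theorem, and in that generality it cannot be one: integrality of $\{x\ge 0: Mx\le r\}$ for all integral $r$ is the Hoffman--Kruskal characterization of totally unimodular matrices, and balanced matrices need not be totally unimodular (this is precisely why the hierarchy of $k$-balanced matrices, interpolating between balanced and totally unimodular according to how large a right-hand side one may allow, exists in the literature). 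So the constraint $Mx\le r=Ma'-(k-1)\mathbf 1$ with $k\ge 2$ takes you outside the scope of the cited result. The standard repair, and essentially what [HHTZ] does, is to use Berge's equitable-colouring theorem: balancedness is preserved under duplicating and deleting columns, so replace each vertex $i$ by $a_i$ parallel copies and take an equitable $k$-colouring of the resulting balanced hypergraph; since each blown-up facet has at least $k$ vertices, every colour class meets every facet, and the colour classes read off a decomposition of $a$ into $k$ one-covers. Your ``by hand'' fallback correctly identifies where the difficulty lies (extracting a special odd cycle from the $2$-colouring obstruction among tight facets) but, as you acknowledge, leaves exactly that step unproved, so it does not close the gap either.
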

For a quasi-tree, we would like to have a more precise characterization. We will prove the following:
\begin{theorem}
\label{thm1}
Let $\Delta$ be a quasi-tree. The following conditions are equivalent:
\begin{itemize}
\item[(i)] The vertex cover algebra $A(\Gamma)$ is standard graded for all $\Gamma \subseteq \Delta$.
\item[(ii)] The vertex cover algebra $A(\Delta)$ is standard graded.
\item[(iii)] $\Delta$ has no special odd cycle.
\end{itemize}
\end{theorem}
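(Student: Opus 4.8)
The three implications can be arranged as $(i)\Rightarrow(ii)\Rightarrow(iii)\Rightarrow(i)$, and two of them cost nothing: $(i)\Rightarrow(ii)$ is the special case $\Gamma=\Delta$, while $(iii)\Rightarrow(i)$ is the equivalence $(i)\Leftrightarrow(iii)$ of Theorem~\ref{bs}, which applies since a quasi-tree is in particular a simplicial complex. Hence the whole content of the theorem is $(ii)\Rightarrow(iii)$, which I would prove in contrapositive form: \emph{if the quasi-tree $\Delta$ has a special odd cycle then $d(A(\Delta))\ge 2$}, so $A(\Delta)$ is not standard graded.

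Assume $\Delta$ has a special odd cycle $v_1,F_1,\dots,v_s,F_s,v_{s+1}=v_1$ with $s$ odd, and put $C=\{v_1,\dots,v_s\}$. Specialness gives $|G\cap C|\le 2$ for every facet $G$, and since $v_j\ne v_{j+1}$ with $v_j,v_{j+1}\in F_j$ we get $F_j\cap C=\{v_j,v_{j+1}\}$ for each $j$. Fix a relation tree $T(\Delta)$, let $T_0$ be the minimal subtree of $T(\Delta)$ through $\{F_1,\dots,F_s\}$, and let $\Gamma$ be the SMD of $\Delta$ with $\Im(\Gamma)=V(T_0)$; as in the proof of Lemma~\ref{lm2}, $\Gamma$ is again a quasi-tree, with the induced leaf order and relation tree $T_0$. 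By Lemma~\ref{lm2}, $|G\cap C|\ge 2$ for all $G\in V(T_0)$, so in fact $|G\cap C|=2$ for every facet $G$ of $\Gamma$.

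I would then reduce from $\Delta$ to $\Gamma$. Whenever $\Im(\Delta)\ne V(T_0)$, a vertex of $T(\Delta)$ lying outside the subtree $V(T_0)$ at maximal distance from $T_0$ (in $T(\Delta)$) has exactly one neighbour, hence is a free vertex of $T(\Delta)$ and so a leaf of $\Delta$ by the Remark following Definition~1.3; deleting it produces a smaller quasi-tree whose relation tree is the induced subtree of $T(\Delta)$, with the cycle facets $F_1,\dots,F_s$ and $T_0$ untouched. Iterating this and applying Lemma~\ref{lm1} at each step yields $d(A(\Delta))\ge d(A(\Gamma))$. This bookkeeping — that the SMD on a subtree of a relation tree is again a quasi-tree with the induced leaf order, and that peeling off outermost free vertices is genuinely a chain of leaf deletions preserving both the quasi-tree structure and the special odd cycle, so that Lemma~\ref{lm1} may be iterated — is the part requiring the most care, though it is purely combinatorial.

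Finally I would exhibit an indecomposable $2$-cover of $\Gamma$: let $a\in\mathbb{N}^{V(\Gamma)}$ have $a_v=1$ for $v\in C$ and $a_v=0$ otherwise. Since every facet $G$ of $\Gamma$ satisfies $|G\cap C|=2$, $a$ is a nonzero $2$-cover. If $a=b+c$ with $b$ an $i$-cover, $c$ a $(2-i)$-cover and $b,c\ne 0$, then $b$ and $c$ are forced to be $\{0,1\}$-valued, supported on $C$, with $b_v+c_v=1$ on $C$; evaluating the cover conditions of $b$ and of $c$ on the facet $F_j$ and using $F_j\cap C=\{v_j,v_{j+1}\}$ forces $b_{v_j}+b_{v_{j+1}}=i$ for every $j$. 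Summing over $j=1,\dots,s$ (indices modulo $s$) gives $2\sum_j b_{v_j}=si$, so $si$ is even; as $s$ is odd this forces $i\in\{0,2\}$, whence $b=0$ or $c=0$, a contradiction. Hence $a$ is indecomposable, so $d(A(\Gamma))\ge 2$ by the Remark preceding Lemma~\ref{lm1}, and therefore $d(A(\Delta))\ge 2$. I expect this last step to be the decisive one: it is short, and turns entirely on the parity of $s$ together with the input supplied by Lemma~\ref{lm2}, namely that on the core $\Gamma$ every facet meets $C$ in exactly two vertices.
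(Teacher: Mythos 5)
Your proof is correct and follows essentially the same route as the paper: reduce to the subcomplex $\Gamma$ determined by the minimal subtree $T_0$ via iterated use of Lemma~\ref{lm1} (the paper phrases this reduction as an induction on $r=|\Delta|-|\Gamma|$ with two cases, you as an explicit peeling of outermost free vertices of $T(\Delta)$ — the same argument), then exhibit the indicator vector of the cycle vertices as an indecomposable $2$-cover using Lemma~\ref{lm2}. Your parity computation for indecomposability correctly supplies a step the paper merely asserts with ``as the cycle is a special odd cycle, $\delta$ is indecomposable.''
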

\begin{proof}
From Theorem \ref{bs} we know:
\begin{itemize}
\item[-] (i) if and only if (iii).
\item[-] (iii) implies (ii).
\end{itemize}   
So, to complete the proof,  we only need to show that (ii) implies (iii).\\
Assume $\Delta$ contains a special odd cycle $ v_1,F_1,v_2,F_2,\dots,v_{2k+1},F_{2k+1},v_1$. We need to prove that $d(A(\Delta))>1$. As above, let $T(\Delta)$ be a relation tree of $\Delta$ and let $T_0$ be the minimal subtree of $T(\Delta)$,  with respect to inclusion, containing $\{F_1,\dots,F_{2k+1}\}$. Let $\Gamma$ be the SMD corresponding to $T_0$. Set $ r=|\Delta|-|\Gamma|$. We  use induction on $r$.\\

 If $r=0$ then $\Delta=\Gamma$.\\
Put $a_i = \left\{ \begin{array}{ll}
1 & \textrm{if $i=v_j$, for some $j=1,\dots,2k+1$}\\
0 & \textrm{otherwise}
\end{array} \right.$ \\
and $\delta=(a_1,\dots,a_n)$. By Lemma \ref{lm2} we have that $\delta$ is a 2-cover of $\Delta$. As $ v_1,F_1,v_2,F_2,\dots,v_{2k+1},$ $F_{2k+1},v_1$ a special odd cycle, $\delta$ is an indecomposable 2-cover of $\Delta$. So $\Delta$ is not 
standard graded, a contradiction.\\

 If $r>0$ we have two cases.

\textit{Case 1:} There exists a leaf $F$ of $\Delta$ not belonging to $\Gamma$. Let $\Sigma$ be the SMD obtained from $\Delta$ by removing $F$, so $\Sigma$ contains $\Gamma$. By Lemma \ref{lm1} we have $d(A(\Delta)) \geq d(A(\Sigma))$ and by induction on $r$ we have $d(A(\Sigma))>1$, so $d(A(\Delta))>1$.

\textit{Case 2:} All leaves of $\Delta$ belong to $\Gamma$. We  show that $\Delta=\Gamma$. As all leaves of $\Delta$ belong to $\Gamma$, every free vertex of $T(\Delta)$ belongs to $T_0$. Because $T(\Delta)$ is a tree, it does not contain any cycles. If we extend $T_0$ then we always obtain a tree that has at least one free vertex that does not belong to $V(T_0)$. So we cannot extend $T_0$ to obtain $T(\Delta)$. This means that $T(\Delta)=T_0$. So we have that $\Delta=\Gamma$. As $r>0$ this is  a contradiction.
\end{proof}
\begin{example}
\label{vd}
Let $n\geq3$ and $V=\{1,\dots,2n\}$. Let $\Delta_n$ be the quasi-tree  on $V$ with facets $\{F,F_1,\dots,F_n\}$ where $F_i=\{1,\dots,\hat{i},\dots,n,n+i\}$ for all $i=1,\dots,n$ and $F=\{1,\dots,n\}$. The vertex cover algebra of this quasi-tree is not standard graded because it contains the special odd cycle $2,F_1,3,F_2,1,F_3,2$.
\end{example}
We would like to have a more precise result for this example. We have the following:
\begin{proposition} Let $\Delta_n$ be the simplicial complex defined above. Then   $d(A(\Delta_n))=n-1$.
\end{proposition}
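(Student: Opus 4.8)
The plan is to prove $d(A(\Delta_n))\ge n-1$ and $d(A(\Delta_n))\le n-1$ separately. For the \emph{lower bound} I will exhibit an indecomposable $(n-1)$-cover, which is then a generator of $A(\Delta_n)$ lying in degree $n-1$. Let $\gamma$ be the vector with $\gamma_i=1$ for $i=1,\dots,n$ and $\gamma_i=0$ for $i=n+1,\dots,2n$. Then $\sum_{i\in F}\gamma_i=n\ge n-1$ and $\sum_{i\in F_j}\gamma_i=n-1$ for every $j$, so $\gamma$ is an $(n-1)$-cover. Suppose $\gamma=b+c$ with $b$ a $p$-cover, $c$ a $q$-cover, $p+q=n-1$, $b,c\neq 0$. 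Since $0\le b,c\le\gamma$, the vectors $b$ and $c$ are $0/1$-vectors with supports $B$ and $[n]\setminus B$ inside $\{1,\dots,n\}$. If $p\ge1$ the inequality for $F$ gives $\abs B\ge p$, and then the inequality for some $F_\ell$ with $\ell\in B$ gives $\abs B-1\ge p$; symmetrically $\abs{[n]\setminus B}\ge q+1$ if $q\ge1$ (the cases $p=0$ or $q=0$ are handled analogously and also give a contradiction). Adding, $n\ge(p+1)+(q+1)=n+1$, impossible. Hence $\gamma$ is indecomposable and $d(A(\Delta_n))\ge n-1$.

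For the \emph{upper bound} it suffices to prove that every $k$-cover $a$ with $k\ge n$ is decomposable, since then $A(\Delta_n)$ has no generator in degree $\ge n$. First I reduce to covers supported on $\{1,\dots,n\}$. If some index $i\le n$ has $a_i\ge1$ and $a_{n+i}\ge1$, let $\delta$ be the vector with $\delta_i=\delta_{n+i}=1$ and all other entries $0$; then $\delta$ is a $1$-cover, and subtracting it lowers the sum over \emph{every} facet by exactly one, so $a-\delta$ is a $(k-1)$-cover and $a$ decomposes. Otherwise $a_i=0$ whenever $a_{n+i}\ge1$, and for such $i$ the vector obtained from $a$ by removing one from the $(n+i)$th coordinate is again a $k$-cover, since only the inequality for $F_i$ is affected and it has slack because $\sum_{l\in F}a_l\ge k$. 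Iterating, we pass to a coordinatewise smaller $k$-cover $a'$ with $a'_{n+i}=0$ for all $i$; as $a\ge a'$, decomposing $a'$ decomposes $a$.

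So assume $a$ is a $k$-cover with $a_{n+i}=0$ for all $i$, and set $s=\sum_{i=1}^n a_i$. The inequality for $F_i$ becomes $s-a_i\ge k$, i.e.\ $a_i\le M:=s-k$ for each $i$; summing over $i$ gives $s\le nM$, hence $M=s-k\ge k/(n-1)\ge n/(n-1)>1$, so $M\ge2$, and in particular $s>n$ and $\abs{\operatorname{supp}(a)}\ge2$. Choose $B\subseteq\operatorname{supp}(a)$ with $\abs B\ge2$ containing every index with $a_i=M$, let $b$ be the $0/1$-vector with support $B$, and set $c=a-b$. Then $\max_i b_i=1$ and $\max_i c_i\le M-1$, and one checks directly that $b$ is a $(\abs B-1)$-cover with $\abs B-1\ge1$, and $c$ is a $\big(\sum_i c_i-\max_i c_i\big)$-cover of order $\ge k+1-\abs B\ge1$ (using $\abs B\le n\le k$). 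Their orders sum to $s-1-\max_i c_i\ge s-M=k$, so, lowering them to make the total exactly $k$ (a $p$-cover is a $p'$-cover for every $1\le p'\le p$), we get $a=b+c$ as a genuine decomposition. Hence $d(A(\Delta_n))\le n-1$, and together with the lower bound $d(A(\Delta_n))=n-1$.

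The crux is the construction of the splitting $a=b+c$ in the last paragraph: the two summands must \emph{both} be genuine covers of positive order --- which, for vectors supported on $\{1,\dots,n\}$, means support of size at least two --- while at the same time $\max_i b_i+\max_i c_i\le s-k$ must hold so that the two orders still add up to at least $k$. Both conditions can be met precisely because $k\ge n$ together with $a_i\le M$ forces $M=s-k\ge2$; this is where the particular incidence pattern of $\Delta_n$ is used, and it also explains why the maximal degree is $n-1$ rather than the value $2$ coming from the special odd cycle of Example~\ref{vd}.
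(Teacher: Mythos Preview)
Your proof is correct and follows essentially the same strategy as the paper: the same vector $(1,\dots,1,0,\dots,0)$ serves as the indecomposable $(n-1)$-cover, the reduction to covers supported on $\{1,\dots,n\}$ is identical, and the final decomposition is obtained by subtracting the indicator of a suitable subset $B$. The only noteworthy difference is in that last step: the paper sorts the entries and splits into the two cases ``all $a_i$ equal'' versus ``not all equal'', taking $B$ to be the block of maximal entries, whereas you avoid the case distinction by introducing $M=s-k$, proving $M\ge 2$ from $k\ge n$, and choosing any $B\subseteq\operatorname{supp}(a)$ of size $\ge 2$ containing $\{i:a_i=M\}$. Your formulation is a little more streamlined; the paper's is a little more explicit about which coordinates are lowered. (One small remark: your ``in particular $\abs{\operatorname{supp}(a)}\ge 2$'' deserves one extra word of justification---it follows from $a_i\le M$ for all $i$ together with $s=k+M>M$, hence $\abs{\operatorname{supp}(a)}\ge s/M>1$.)
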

\begin{proof}
First, we show there exists an indecomposable $(n-1)$-cover. Set $a=(\underbrace{1,1,\dots,1}_{n},\underbrace{0,0,\dots,0}_{n})$. Obviously $a$ is $(n-1)$-cover. Assume $a$ is decomposable, that is $a=b+c$ with $b=(b_1,\dots,b_{2n})$ a $r$-cover and $c=(c_1,\dots,c_{2n})$ a $s$-cover. If the number of $b_i\not= 0$ is $k$ then the number of $c_j\not= 0$ is $n-k$. We have $r\leq k-1$ and $s\leq (n-k)-1$ so $r+s<n-1$, a contradiction. So $a$ is indecomposable.\\

Second, we show that any $k$-cover $a=(a_1,\dots,a_{2n})$ with $k\geq n$ is decomposable.

If there exists $j\in [n]$ such that $a_j\not=0$ and $a_{n+j}\not=0$, we have 
\begin{eqnarray*}
(a_1,\dots,a_{2n})&=&\phantom{+}(a_1,\dots,a_j-1,\dots,a_{n+j}-1,\dots,a_{2n})+ \\
  &&+  (0,\dots,0,\underbrace{1}_{j},0,\dots,0,\underbrace{1}_{n+j},0,\dots,0)
\end{eqnarray*}
where  the first summand   is a $(k-1)$-cover and
the second one  is  a 1-cover. So $a$ is decomposable.

 If there exists $i\in [n]$ such that $a_i=0$ and $a_{n+i}\not=0$, we have 
$$(a_1,\dots,a_{2n})=(a_1,\dots,a_{n+i}-1,\dots,a_{2n})+(0,\dots,0,\underbrace{1}_{n+i},0,\dots,0).$$
As  $a_1+\dots+a_{i-1}+a_{i+1}+\dots+a_n\geq k$,   the first summand   is a $k$-cover.
The second one  is  a 0-cover. So $a$ is decomposable.\\

So we can suppose $a_{n+i}=0$ for all $i=1,\dots,n$. We can assume that $a_1\leq a_2\leq\dots\leq a_n$. So $a$ is a $k$-cover iff $a_1+a_2\dots+a_{n-1}\geq k$. We have two cases:

\textit{Case 1:} $a_1=a_2=\dots=a_n=p$ with $p\geq 2$. Then 
$$a=(\underbrace{1,\dots,1}_{n},\underbrace{0,\dots,0}_{n})+(\underbrace{p-1,\dots,p-1}_{n},\underbrace{0,\dots,0}_{n})$$
with the first summand   a $(n-1)$-cover and the second summand  a $k-(n-1)$-cover. So $a$ is decomposable.

\textit{Case 2:} There exist $r\in[n]$ such that \[a_1\leq a_2\leq \dots\leq a_r < a_{r+1}=a_{r+2}=\dots=a_n.\] 
We have
\begin{eqnarray*}
a&=&\phantom{+}(0,\dots,0,\underbrace{1,\dots,1}_{n-r},\underbrace{0,\dots,0}_{n})+\\
&&+(a_1,\dots,a_r,a_{r+1}-1,\dots,a_n-1,0,\dots,0).
\end{eqnarray*}
Obviously $(0,\dots,0,\underbrace{1,\dots,1}_{n-r},\underbrace{0,\dots,0}_{n})$ is a $(n-r-1)$-cover. Because 
\[\begin{array}{l}
a_1\leq\dots\leq a_r\leq a_{r+1}-1\leq\dots\leq a_n-1\textrm{~ and} \\
\rule{0pt}{3ex}a_1+\dots+a_r+a_{r+1}-1+\dots+a_{n-1}-1\geq k-(n-r-1)
\end{array}\]
 we have that $(a_1,\dots,a_r,a_{r+1}-1,\dots,a_n-1,0,\dots,0)$ is a $(k-n+r+1)$-cover. So $a$ is decomposable.
\end{proof}
The following figure shows $\Delta_n$ for $n=3$:
\begin{figure}[h]
\setlength{\unitlength}{1cm}
\begin{picture}(6,6)
\put(1,1){\circle*{0.1}}
\put(3,1){\circle*{0.1}}
\put(5,1){\circle*{0.1}}
\put(2,3){\circle*{0.1}}
\put(4,3){\circle*{0.1}}
\put(3,5){\circle*{0.1}}

\put(1,1){\line(1,2){2}}
\put(3,5){\line(1,-2){2}}
\put(1,1){\line(1,0){4}}
\put(2,3){\line(1,-2){1}}
\put(2,3){\line(1,0){2}}
\put(3,1){\line(1,2){1}}
\put(0.7,1){$5$}
\put(2.9,0.55){$3$}
\put(5.1,1){$4$}
\put(1.7,3){$1$}
\put(4.1,3){$2$}
\put(2.9,5.2){$6$}
\put(2.8,3.5){$F_3$}
\put(2.85,2.1){$F$}
\put(1.8,1.6){$F_1$}
\put(3.8,1.6){$F_2$}

\end{picture}
\caption{$\Delta_3$}
\end{figure}

\vspace{1cm}
Address of the authors:\\
\textsc{Dipartimento di Matematica,
Universit\`a di Genova\\
Via Dodecaneso 35, 16146 Genova, Italia}\\ 
\textit{E-mail:} \texttt{constant@dima.unige.it,     ledinh@dima.unige.it}

\end{document}